\documentclass[11 pt]{amsart}
\usepackage[showframe=false, margin = 1.35 in]{geometry}
\usepackage[shortlabels]{enumitem}
\usepackage{theoremref}
\usepackage{tikz}
\usepackage{subfigure, float}
\usepackage{pgfplots}
\usepackage{comment}
\usepackage{mathrsfs}

\newcommand{\T}{\mathcal{T}}
\newcommand{\ZZ}{\mathbb{Z}}
\newcommand{\OZ}[0]{\vec{\mathbb Z}}

\newcommand{\f}{\frac}
\newcommand{\ind}[1]{\mathbf{1}{\{ #1 \}}}

\newcommand{\0}{\mathbf 0}

\newcommand{\npa}{\nu_{||}}

\DeclareMathOperator{\CFM}{CFM}

\DeclareMathOperator{\Bin}{Bin}

\newtheorem{thm}{Theorem}
\newtheorem{lemma}[thm]{Lemma}

\theoremstyle{remark}

\theoremstyle{definition}

\usepackage{mathtools}

\mathtoolsset{showonlyrefs}

\title{Critical percolation and A+B$\to$2A dynamics}


\author{Matthew Junge}
\email{jungem@math.duke.edu}

\begin{document}
	\maketitle
	 
\begin{abstract}
We study an interacting particle system in which moving particles activate dormant particles linked by the components of critical bond percolation. Addressing a conjecture from Beckman, Dinan, Durrett, Huo, and Junge for a continuous variant, we prove that the process can reach infinity in finite time i.e., explode. In particular, we prove that explosions occur almost surely on regular trees as well as oriented and unoriented two-dimensional integer lattices with sufficiently many particles per site. The oriented case requires an additional hypothesis about the existence and value of a certain critical exponent. We further prove that  the process with one particle per site expands at a superlinear rate on integer lattices of any dimension.
Some arguments use connections to critical first passage percolation, including a new  result about the existence of an infinite path with finite passage time on the oriented two-dimensional lattice. 
\end{abstract}

\section{Introduction}


The goal of this work is to better understand expansion in 
 a growing system of random walks with $A+B \to 2A$ ``frog model" dynamics. In such systems $A$-particles, traditionally referred to as ``active frogs," are mobile. They perform simple random walk and activate stationary $B$-particles, referred to as ``sleeping frogs," upon contact. We introduce a variant in which sleeping frogs are linked by the geometry of critical bond percolation; a visit to a sleeping frog simultaneously wakes every sleeping frog in the connected cluster containing that frog. 
This makes it so large regions are activated instantly which dramatically increases the spread of activated particles. Although we keep with convention and continue to use the frog imagery, these dynamics more truly correspond to combustion; $A$-particles represent diffusing heat and $B$-particles entwined fuel cells (as in \cite{combustion, williams}).  

We first informally describe some related graph percolation models. \emph{First passage percolation} is the random metric induced by assigning independent passage times to each edge according to a distribution function $F$. \emph{Bond percolation} is the special case that edges are $p$-open (have passage time $0$) with probability $p$ and otherwise  are $p$-closed (the passage time is infinite). The critical value is the parameter $p_c$ above which an infinite component of $p$-open edges emerges. \emph{Critical first passage percolation} is the case in which $F(0) =p_c$.
%


 The \emph{critical frog model on $G$ with $m$ particles per site}, which we will often abbreviate as $\CFM(G,m)$, begins by placing $m$ sleeping frogs at each vertex. Call each component of vertices that are pathwise connected via $p_c$-open edges a \emph{cluster}. 
The process begins by waking up all frogs in the cluster containing the root. When a frog wakes up, it diffuses according to a simple random walk on $G$ with independent exponential(1)-distributed delays between jumps. Whenever a frog jumps on a cluster with sleeping frogs, every frog in the cluster wakes up and begins its own independent random walk. We will call every vertex in such a cluster \emph{activated}. 

Recently, Dinan, Durrett, Beckman, Huo, and Junge generalized the frog model to $d$-dimensional Euclidean space \cite{BMF}. They replaced simple random walks with Brownian motions and the underlying graph structure by distributing particles according to a Poisson point process and including a disk of radius $r$ around each point. Whenever an active particle comes within $r$ of a cluster of overlapping disks, all dormant particles in that cluster activate and begin their own Brownian motions. 

Since entire clusters of overlapping disks activate simultaneously, it is important to understand the cluster geometry. This is a special case of continuum percolation called the \emph{Boolean model}. It is known that there is a critical value $r_d$ such that for $r > r_d$ an infinite cluster of overlapping disks forms in $\mathbb R^d$ \cite{meester}. Once this cluster is discovered, the Brownian frog model has infinitely many moving particles. The main result of \cite{BMF} is that for $r< r_d$ the set of visited sites scaled linearly by time has the Euclidean ball with a deterministic radius (that depends on $r$) as its limiting shape.

The main open question from \cite{BMF} was to describe the rate the Brownian frog model expands at criticality $(r= r_d)$. The expansion is conjectured to be superlinear, but the authors did not try to guess the exact rate. 
The inspiration for the critical frog model comes from this conjecture. By moving to graphs, where percolation is better understood, we are able to prove concrete results. Continuum percolation in Euclidean space and bond percolation on the integer lattice are believed to have similar behavior at criticality. So, a result in either setting is suggestive of what occurs in the other. 

Similar continuous time $A+B\to 2A$ dynamics were studied by Ram\'irez and Sidoravicius  in \cite{combustion}. They viewed the active/asleep dynamics as a toy model for heat diffusing and igniting fuel. The motivation was to rigorously study a caricature  of chemical reactions associated to the steady-state burning of a homogeneous solid \cite[Chapter 9]{williams}. Note that they considered the process without clusters linking particles, so a visit to a vertex only activates the particles at that site.
%
Using different methods, Alves, Machado, and Popov in \cite{shape} concurrently proved a shape theorem similar to \cite{combustion}, but in discrete time. Subsequently, the discrete-time process has been extensively studied on lattices and trees (see \cite{frogs, new_drift,  HJJ1}  for a start). Note that bond percolation plays an important role in \cite{dobler2018recurrence}. 

Another closely related $A+B \to 2A$ model was studied in a sequence of papers by Sidoravicius and Kesten \cite{KS1, KS2, KS3}. With the intention of modeling the spread of a rumor or infection, they considered the process in which both $A$- and $B$-particles perform continuous time simple random walks. Starting with one $A$-particle at the origin, they proved that the set of locations of $A$-particles expands linearly in time. One could define an analogous critical process with these dynamics; when an $A$-particle meets a $B$-particle, it and all other $B$-particles that started in the same cluster convert to $A$-particles.  It would be interesting, and perhaps more tractable, to prove that this process explodes on lattices and trees.


\subsection{Results}
 
Let $G = (\mathcal V, \mathcal E)$ be a connected, infinite graph with root $\0$. The examples we focus on are: 


\begin{itemize} 
	\item The $d$-ary tree $\mathbb T_d$ which is the rooted tree in which each vertex has $d$ children. 
	\item The integer lattice $\mathbb Z^d$ has the standard basis $e_i = (0,0,\hdots,0,1,0,\hdots,0)$ for $1 \leq i \leq d$. The set of vertices are all integer linear combinations of $(e_i)$, and edges exist between any vertices separated by distance $1$. 
	\item The oriented lattice $\OZ^d$ which is the usual integer lattice in which each edge $(v, v+ e_i)$ is directed towards $v+e_i$. 
\end{itemize}

 Let $\xi_t$ be the set of activated vertices at time $t$. We say that $\CFM(G,m)$ is \emph{explosive} if there exists an almost surely finite time $T$ such that $|\xi_T| = \infty.$ Note that the possibility of an explosion requires that the process evolves in continuous time. We use a comparison to critical first passage percolation to show that the critical frog model is explosive with enough particles per site. The statement for the oriented case relies on an assumption about the critical exponent associated with the correlation length in the temporal direction, which is commonly denoted by $\npa$ and is defined at \eqref{eq:npa}.
 
 \begin{thm} \thlabel{cor:fpp} 
$\CFM(G,m)$ is explosive for:
\begin{enumerate}[label = (\roman*)]
	\item \label{Z} $\mathbb Z^2$ with $m=4$,
	\item \label{OZ} $\OZ^2$ with $m=2$ assuming $\npa$ defined at \eqref{eq:npa} exists and is strictly less than $2$, and 
	\item \label{tree} $\mathbb T_d$ with $m=1$.
\end{enumerate}
\end{thm}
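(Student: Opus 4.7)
The plan is to reduce all three parts to a comparison with critical first passage percolation (FPP) on $G$. I would construct a coupling under which the activated set in $\CFM(G,m)$ dominates the ball of radius $t$ in an auxiliary FPP on $G$ whose edge weights are: passage time $0$ on each $p_c$-open edge (both endpoints lie in a common cluster and are activated simultaneously in $\CFM$), and passage time stochastically dominated by an exponential of rate proportional to $m$ on each $p_c$-closed edge $(v,w)$ (after $v$ is activated, the time for one of its $m$ particles to jump to $w$ is a minimum of $m$ i.i.d.\ exponentials). Under this coupling, explosion of $\CFM(G,m)$ follows from the existence, with positive probability, of an infinite self-avoiding path in $G$ whose total passage time in the auxiliary FPP is finite; translation invariance and standard $0$--$1$ laws should then upgrade positive probability to almost sure.

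For part (iii) on $\mathbb{T}_d$, the cluster of any vertex at $p_c = 1/d$ is a critical Galton--Watson tree with $\mathrm{Bin}(d, 1/d)$ offspring, finite but with polynomial tails. Even with $m=1$, the self-similar independence of disjoint subtrees allows a branching-type exploration: an activated cluster $C$ contains $|C|$ particles, each of which can be tasked with crossing a nearby closed edge into a fresh independent subtree and discovering a new cluster. By carefully choosing how many exit points to use, the number of clusters activated by generation $k$ grows geometrically, so the waiting time at generation $k$ (a minimum over many exponentials) decays correspondingly fast, giving a summable total activation time.

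For parts (i) and (ii), the key ingredient is the paper's new FPP result on $\vec{\mathbb{Z}}^2$: under the hypothesis $\npa < 2$, an infinite oriented path of finite passage time exists. Pulling this path back through the coupling gives explosion on $\vec{\mathbb{Z}}^2$ with $m=2$, where $m=2$ matches the out-degree so that there is always a candidate particle for each closed-edge crossing along the path. For $\mathbb{Z}^2$ with $m=4$, I would either adapt the oriented construction or invoke an analogous infinite-finite-passage-time path in critical unoriented FPP, with $m=4$ matching the degree; the unoriented case should actually be easier than the oriented one thanks to Russo--Seymour--Welsh-type estimates on critical planar percolation.

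The main obstacle I foresee is that the coupling conceals a nontrivial subtlety: a random walk inside a large critical cluster may wander for a long time before arriving at the precise exit vertex required by the FPP path. Controlling this intra-cluster first-passage time---particularly on $\vec{\mathbb{Z}}^2$, where $\npa$ governs cluster elongation in the time direction---will likely be the most delicate step. Once those detours are bounded (or shown to remain summable along the FPP path), the summability of the inter-cluster exponential delays closes the argument.
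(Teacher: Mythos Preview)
Your overall strategy---couple $\CFM$ to critical FPP and invoke a finite-passage-time-to-infinity result---is exactly the paper's approach, but the coupling you describe has a genuine gap: it does not produce \emph{independent} edge weights. If the passage time on a closed edge $(v,w)$ is the first time one of $v$'s $m$ particles jumps to $w$, then the weights on $(v,w)$ and $(v,w')$ use the same $m$ particles and are correlated; the FPP criteria you want to apply (Damron--Lam--Wang on $\mathbb Z^2$, \thref{thm:OZ} on $\OZ^2$, and the branching/explosion results on $\mathbb T_d$) all require i.i.d.\ edge weights. The paper fixes this by pre-assigning, at each vertex, one frog to each incident (or outgoing) edge in a bijection---this is precisely why $m$ equals the degree---and using only that frog's \emph{first} step. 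On $\mathbb Z^2$ the weight on $(v,w)$ is in fact the \emph{maximum} of the two assigned frogs' jump times (one from each endpoint), so that the bound is valid in either traversal direction. This makes the weights i.i.d., but now an edge gets weight $\infty$ whenever its assigned frog's first step goes the wrong way; the paper then needs an extra truncation $t_e\wedge 1$ and an argument that any finite-passage-time path uses only finitely many truncated edges.

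The obstacle you single out---intra-cluster wandering---is not an issue. Cluster activation is instantaneous and every vertex carries its own $m$ frogs, so the frogs at the exit vertex are already there when the cluster wakes; no walk inside the cluster is ever needed. The real subtleties are the independence issue above and, for part \ref{tree}, the specific tail input. There the paper restricts to frogs at \emph{leaves} of percolation clusters whose first step is away from the root, builds a Galton--Watson tree $\mathcal T$ on these, shows its offspring distribution satisfies $P(Z\ge n^2)\ge c/n$, and applies the plumpness/explosion criterion of Amini et al. Your sketch (``the number of clusters grows geometrically'') is in the right spirit but does not supply this heavy-tail estimate, which is exactly what makes the $m=1$ case work.
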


 The result for $\mathbb Z^2$ follows from an extension of \cite{damron2017} to the case $F(+ \infty) <1$.  We prove a new result, \thref{thm:OZ}, for $\OZ^2$ to establish \ref{OZ}. 
As for our hypothesis on $\npa$, no rigorous proof exists, but it is widely believed that $\npa$ exists and is approximately equal to $1.74$ \cite{npa}. 
 To prove \ref{tree}, we work with an embedded process restricted to the leaves of percolation clusters and apply a result about explosive branching processes with heavy tails  from \cite{explosion}.

Note that by a straightforward coupling, the results of \thref{cor:fpp} are also true for larger values of $m$. We conjecture that the critical frog model is explosive with $m=1$ on $\mathbb Z^d$ and $\OZ^d$ for $d\geq 2$, but prove a weaker result.
Let $M_t = \max \{\|v- \0\| \colon  v \in \xi_t\}$ be the maximum distance between sites of $\xi_t$ and the root. 

\begin{thm} \thlabel{thm:superlinear} Consider $\CFM(G,1)$. Let $\epsilon>0$ be arbitrary and $d \geq 2$. The following hold almost surely:
\begin{enumerate}[label = (\roman*)]
\item \label{oriented} If $G=\OZ^2$, then $M_t/t^{5-\epsilon} \to \infty$. 
\item \label{unoriented} If $G = \mathbb Z^d$ or $\OZ^d$, then $M_t / t \to \infty$.
\end{enumerate}
\end{thm}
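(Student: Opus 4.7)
The plan is to exploit the heavy-tailed distribution of cluster sizes at criticality. The key observation is that when an active frog walks into a cluster of diameter $R$, the activated region instantaneously expands by $\sim R$, so if the walker samples enough weakly dependent clusters, the maximum diameter encountered will eventually dominate $t$.

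For Part \ref{unoriented}, I would first invoke a polynomial lower bound of the form $\P(\operatorname{diam}(C(\0)) \geq n) \geq c n^{-\alpha}$ for some $\alpha > 0$: on $\mathbb Z^d$ this is classical at critical bond percolation, and on $\OZ^d$ it follows from Bezuidenhout--Grimmett-type arguments. The initially active walker at the origin performs a continuous-time random walk whose range by time $t$ contains $\Omega(t/\log t)$ vertices in $\mathbb Z^2$, $\Omega(t)$ in $\mathbb Z^d$ for $d \geq 3$, and exactly $\Theta(t)$ on any oriented lattice. I would then prune these visited vertices to a subset $S$ of mutually distant sites so that the percolation clusters (restricted to growing balls around each site) are measurable with respect to disjoint edge sets and hence independent. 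A union bound shows that the maximum diameter among the $|S|$ samples is $\geq |S|^{1/\alpha - o(1)}$, which exceeds $t$ once $|S|$ is large enough.

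For Part \ref{oriented}, which requires the sharper rate $t^{5-\epsilon}$ on $\OZ^2$, I would run the above argument in cascading stages. A first pass activates a cluster of time-direction radius $\gg t^a$ for some $a > 1$; the $\Omega(t^a)$ newly awakened frogs then each act as fresh walkers that sample further essentially independent clusters, and iterating a bounded number of times amplifies the exponent. The specific value $5$ should drop out of the quantitative tail bound on the time-direction radius of critical oriented clusters on $\OZ^2$ combined with the combinatorial bookkeeping of the amplification.

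The hard part will be controlling the dependence between cluster samples visited by a single walker, since consecutive sites along a walk can lie in the same cluster. The resolution is to restrict attention to visited vertices separated by a distance exceeding the current diameter threshold, which makes the relevant edge sets disjoint. For Part \ref{oriented}, the additional challenge is maintaining independence across successive waves of freshly activated walkers, and pinning down the specific exponent $5$ is what drives the technical bookkeeping.
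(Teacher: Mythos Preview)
Your strategy of tracking the range of a single walker and taking the \emph{maximum} cluster diameter among pruned samples does not yield $M_t/t \to \infty$. After pruning to sites at mutual distance $R$ (so that clusters restricted to $R$-balls are independent), you have $|S| \le t/R$ samples, and the event that one of them reaches diameter $R$ has probability $\asymp (t/R)R^{-\alpha}$; optimizing gives a detectable scale $R \asymp t^{1/(1+\alpha)}$, which is $o(t)$ for every $\alpha>0$. Even if you ignore pruning and pretend all $\sim t$ samples are independent, the maximum is $\sim t^{1/\alpha}$, which exceeds $t$ only when $\alpha<1$; this fails in high dimensions (the one-arm exponent is $2$ in the mean-field regime for unoriented percolation and $1$ for oriented). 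So the sentence ``which exceeds $t$ once $|S|$ is large enough'' is where the argument breaks.

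The paper's mechanism is different: it \emph{sums} i.i.d.\ jumps rather than taking a single maximum. On $\OZ^d$ one follows a single ancestry line: from the topmost vertex $x_i$ of the current cluster, wait one exponential clock, step to $y_i$, and take the cluster of $y_i$; because $y_i$ sits strictly above everything explored so far, the forward cluster depends only on fresh edges, so the increments $J_i = \|y_i - x_{i-1}\|$ are genuinely i.i.d. For part~\ref{unoriented} one needs only $\E J = \infty$ (proved by a short branching/continuity argument at $p_c$), and then the strong law gives $\sum_{i\le n} J_i / n \to \infty$ while elapsed time is $\sim n$. For part~\ref{oriented} the box-crossing bound $\P(J\ge n) \ge c n^{-1/5}$ upgrades this to $\sum_{i\le n} J_i \gg n^{5-\epsilon}$, and the exponent $5$ falls out directly with no cascading.

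For unoriented $\mathbb Z^d$ there is a second issue your outline does not address: a single frog takes infinite expected time to cross into each successive half-space, so one cannot simply re-run the oriented argument. The paper resolves this by first waiting for three active frogs and then using the elementary fact that the minimum of three independent one-dimensional hitting times has finite mean (\thref{lem:SRW}); combined with the half-space restriction (which preserves $p_c$ by \cite{uhp}) this recovers i.i.d.\ infinite-mean jumps separated by bounded-mean waiting times.
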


The proof for the oriented case goes by  tracking the maximally displaced point in one ancestry line. A lower bound for box-crossing probabilities from \cite{box} guarantees that the jump size in some direction is at least $n$ with probability $ct^{-1/5}$ for some fixed $c>0$.  Since we only consider one particle, we wait an $\text{exponential}(1)$-distributed time between each jump. The law of large numbers then gives the claimed growth of this ancestry line, which serves as a lower bound for $M_t$. For higher dimensions we do not have as good of a tail estimate, but it is not so hard to prove that the expected jump size is infinite at criticality, which is enough to deduce $M_t/t \to \infty$.

The unoriented setting has two extra complications. First, we use a subprocess that jumps when a new half-space with a sufficiently large cluster is reached in the $e_d$-direction. A similar argument as for the oriented case shows that the cluster size has infinite expectation. This requires an additional observation from \cite{uhp} that $p_c$ is the same on $\mathbb Z^d$ and on the $d$-dimensional half-space. The second difficulty is that, unlike in the oriented case, frogs might move back towards the origin. Thus, we need to control the number of active frogs near the front of the subprocess in order to ensure that the time between jumps has finite expectation. This relies on a relatively simple estimate for the hitting time of a set for multiple one-dimensional simple random walks.

\subsection{Critical first passage percolation} \label{sec:fpp_def}


Assign independent uniform$(0,1)$ random variables $(\omega_e)_{e \in \mathcal E}$ to each  $e \in \mathcal E$. In \emph{$p$-bond percolation}, $e\in \mathcal E$ is \emph{$p$-open} if $\omega_e \leq p$. Otherwise, it is \emph{$p$-closed}. The \emph{critical value} is the infimum over $p$ such that there is almost surely an infinite path of adjacent $p$-open edges:
$$p_c= p_c(G) = \inf \{ p \colon P(\text{there exists an infinite $p$-open path}) =1\}.$$
It is easy to show that $p_c(\mathbb T_d) = 1/d$ by comparison to a critical branching process. In \cite{kesten1/2}, Kesten proved that $p_c(\mathbb Z^2) =1/2$. The exact value of $p_c(\mathbb Z^d)$ is not known in higher dimensions, nor is $p_c(\OZ^d)$ known for any $d$. 
%

Let $F$ be a distribution function with $F(x) =0$ for $x<0$ with $F(+\infty) =1$. The inverse of $F$ is defined as
$$F^{-1}(y) =  \inf \{ y \colon F(y ) \geq x\}.$$
  Define the \emph{passage time along edge $e$} as $t_e = F^{-1}(\omega_e)$. Assign to each \emph{vertex self-avoiding path}, $\gamma = (x,v_1,v_2,\hdots, v_n, y)$ with all vertices distinct, the passage time $T(\gamma) = \textstyle \sum_{(u,v) \in \gamma} t_{(u,v)},$
and define the \emph{passage time from $x$ to $y$} as
$$T(x,y) = \inf \{ T(\gamma) \colon \gamma \text{ is a path from $x$ to $y$} \}.$$
The random metric space induced by $T(\cdot\:, \cdot)$ is referred to as \emph{first passage percolation}. 
Define the \emph{passage time to infinity} as the limit of the minimal passage times to distance $n$:
\begin{align}
\rho=\rho(G,F) = \lim_{n \to\infty} \inf\{ T(\0, x)\colon \|x\| = n\}.\label{eq:rho}
\end{align}
Note that the limit exists for trees and lattices by monotonicity. The Kolmogorov $0$-$1$-law ensures that $P(\rho < \infty) \in \{0,1\}$.

Considerable attention has been devoted to the geometry of $p_c$-bond percolation. On trees, a simple comparison to a critical branching process shows that there are no infinite $p_c$-open paths. The same is believed to hold for  $\mathbb Z^d$. This is only known in dimension $2$ \cite{kesten1/2} and in high-dimensions \cite{slade}. Critical first passage percolation is a natural way to relax $p_c$-bond percolation.

 Zhang initially studied $F$ with $F(0) > p_c$ in \cite{zhang0}. It follows immediately that $\rho < \infty$ for such $F$, since there is an infinite zero-passage time path.  On the other hand, for $F(0)<p_c$ it is not hard to show that $\rho= \infty$ almost surely \cite[Proposition 4.4]{fpp}. For these reasons,  first passage percolation with any $F$ satisfying $F(0) = p_c$ is referred to as \emph{critical first passage percolation}. 
 
 In a followup work \cite{zhang}, Zhang investigated the critical case and proved that a ``double behavior" occurs on $\mathbb Z^2$. In particular, he showed that for 
\begin{align}F_a(x) = \begin{cases} 0, & x < 0  \\ p_c + x^{a}, & p_c+x^{a}\leq 1 \\ 1 , &p_c + x^{a} > 1  \end{cases}\label{eq:F_a}
\end{align}
it holds that $\rho(\mathbb Z^2, F_a) < \infty$ almost surely for small enough $a$. On the other hand, for distributions that approach $p_c$ more rapidly,
\begin{align}G_b(x) = \begin{cases} 0, & x < 0  \\ p_c + \exp(-1/x^{b} ), &  p_c + \exp(-1/x^{b} ) \leq 1 \\ 1, &  p_c + \exp(-1/x^{b} )  >1 \end{cases},\label{eq:G_b}
\end{align}
it holds that $\rho(\mathbb Z^2, G_b) = \infty$ for large enough $b$. 
Rather recently, Damron, Lam, and Wang in \cite{damron2017} gave exact conditions on $F$ with $F(+\infty)=1$ that lead to finite $\rho$: 
\begin{align}
\rho(\mathbb Z^2, F) < \infty \text{ if and only if } \sum_{k=1}^\infty F^{-1}(p_c + 2^{-k}) < \infty.\label{eq:damron_iff}
\end{align}

Their approach relies on the relative wealth of knowledge about percolation in two dimensions. It appears to be difficult to prove analogues of \eqref{eq:damron_iff} for $\mathbb Z^d$ with $d >2$. In fact, proving that $\rho$ is finite in the case of bond-percolation on $\mathbb Z^3$ is one of the major open problems in the field. However, a result of Bramson for branching random walk that predates the introduction of critical first passage percolation tells us what happens on trees. A direct consequence of \cite[Theorem 2]{Bramson1978} is that 
\begin{align}\text{ $\rho (\mathbb T_d , F) < \infty$ a.s.\ if and only if $\exists \lambda >0$ with  }\sum_{k=1}^\infty F^{-1}\left(p_c + \exp(-\lambda^{k})\right) < \infty.\label{eq:tree_iff}\end{align}
When $\rho = \infty$, Bramson's theorem additionally describes the growth rate: 
\begin{align}T_n(\mathbb T_d, F) \approx \sum_{k=1}^{\log \log n} F^{-1}(p_c + \exp(2^{-k})).\label{eq:tree_rate}\end{align}
It seems to us that this reformulation of Bramson's work in terms critical first passage has not been observed in the literature. This is especially interesting because \eqref{eq:tree_iff} is markedly weaker than the recently proven criterium at \eqref{eq:damron_iff}. For instance, $\rho(\mathbb T_d, G_b) < \infty$, but $\rho(\mathbb Z^2, G_b) = \infty$ for all $b>0$. Also, \eqref{eq:tree_rate} implies that when $F$ is such that $p_c$-closed edges have passage time 1 rather than infinity, we have $T_n(\mathbb T_d, F) \approx \log \log n$. This contrasts with the result of Chayes, Chayes, and Durrett that $T_n(\mathbb Z^2,F) \approx \log n$ in this setting \cite{ccd}.  

Obtaining a similar necessary and sufficient condition as either \eqref{eq:damron_iff} or \eqref{eq:tree_iff} for $\OZ^2$ appears challenging. The main difficulty is that we do not understand $p$-bond percolation with $p$ near $p_c$ nearly as well as with the unoriented case. We instead prove a  result tailored for the critical frog model in which we assume that a certain critical exponent exists. We delay defining it until just before the proof. Below we write $F(x) = p_c + O(x^a)$ to mean $0 \leq F(x)-p_c \leq Cx^a$ for some $C>0$ and all $0<x < \epsilon$ for some small $\epsilon >0$. 

\begin{thm} \thlabel{thm:OZ} Suppose that the critical exponent $\npa$ defined at \eqref{eq:npa} exists. If $F(x)= p_c + O(x^a)$ with $a^{-1} > \npa -1$, then $\rho < \infty$ a.s. 
\end{thm}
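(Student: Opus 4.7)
I would adapt the multi-scale renormalization strategy familiar from Zhang's and Damron--Lam--Wang's treatment of critical FPP on $\mathbb Z^2$, replacing the $\mathbb Z^2$ correlation length there by the temporal correlation length in $\OZ^2$, which is governed by $\npa$. The goal is to build, scale by scale, an oriented path from $\0$ of ever-increasing temporal height whose scale costs form an a.s.\ convergent series.

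Fix $\delta_k = 2^{-k}$, $p_k = p_c + \delta_k$, and let $L_k$ be comparable to the temporal correlation length $\xi_{||}(p_k)$; by the assumed existence of $\npa$ at \eqref{eq:npa}, $L_k \asymp \delta_k^{-\npa}$. At scale $k$ I would extend the current partial path from height $\sim L_{k-1}$ up to height $\sim L_k$ using only $p_k$-open oriented edges. Since $L_k - L_{k-1}$ is of order $\xi_{||}(p_k)$, a standard oriented-percolation renormalization in the spirit of the box-crossing estimate \cite{box} produces such a crossing with probability bounded below, which can be boosted towards $1$ by stacking a constant number of transversal translates. Scale-by-scale independence, needed for Borel--Cantelli to convert individual successes into an actual infinite path, can be arranged by reserving the edges with $\omega_e \in (p_{k-1}, p_k]$ for the scale-$k$ extension.

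The heart of the estimate is the passage-time cost of a single extension. On a length-$L_k$ path known to be $p_k$-open, only edges with $\omega_e \in (p_c, p_k]$ contribute to the passage time, and each such edge contributes at most $F^{-1}(p_k) = O(\delta_k^{1/a})$ by the hypothesis on $F$. Conditioning on $p_k$-openness, the expected scale-$k$ cost is
\begin{equation}
\frac{L_k}{p_k}\int_{p_c}^{p_k} F^{-1}(u)\,du \;=\; O\bigl(L_k \, \delta_k^{\,1+1/a}\bigr) \;=\; O\bigl(\delta_k^{\,1+1/a - \npa}\bigr).
\end{equation}
With $\delta_k = 2^{-k}$, the resulting geometric series $\sum_k 2^{-k(1+1/a-\npa)}$ is finite exactly when $a^{-1} > \npa - 1$. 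A Markov plus Borel--Cantelli argument then promotes finite expected total cost to $\rho < \infty$ almost surely.

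The main obstacle is geometric rather than arithmetic: one must arrange the scale-$k$ extension to actually begin at (or very near) the endpoint of the scale-$(k-1)$ extension, while keeping different scales independent and the per-scale failure probabilities summable. This is handled by combining the $\omega$-stratification above with a short patching step that routes across a constant number of transversal blocks whenever the natural start point lies off the chosen $p_k$-open crossing. A secondary technicality is that only the qualitative existence of $\npa$ is assumed, so any slowly varying prefactors hidden in $\xi_{||}(p_k) \asymp \delta_k^{-\npa}$ must be absorbed by letting $\delta_k$ shrink slightly faster than geometrically if needed.
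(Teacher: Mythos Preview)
Your overall strategy and arithmetic match the paper's: a multi-scale renormalization indexed by $p_k\downarrow p_c$ geometrically, with per-scale cost of order $L_k\cdot\delta_k\cdot F^{-1}(p_k)=O(\delta_k^{\,1+1/a-\npa})$, summable exactly when $a^{-1}>\npa-1$.

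The gap is in how you produce the infinite \emph{connected} path. A \emph{constant} number of transversal translates only boosts the scale-$k$ success probability to some constant below $1$; the failure probabilities are then not summable, so Borel--Cantelli does not yield ``eventually every scale succeeds.'' More seriously, on $\OZ^2$ there is no sideways routing: your ``short patching step'' cannot connect the top of one crossing to the bottom of the next unless they already meet, and arranging this while keeping the scales independent is precisely the hard part. (The $\omega$-stratification device is also a red herring: edges with $\omega_e\in(p_{k-1},p_k]$ alone have density $2^{-k}\ll p_c$ and cannot carry crossings; independence comes for free from spatial disjointness of the height strips.) The paper resolves all of this at once via Durrett's block construction from \cite{RD84}. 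It fixes dyadic strips $\ZZ\times[2^n,2^{n+1}]$ and introduces a free parameter $\lambda<1/\npa$ with $p_n=p_c+2^{-\lambda n}$, so that the correlation length obeys $L_n=o(2^n)$ and \emph{many} overlapping parallelograms of side $\sim L_n$ fit inside each strip. Declaring a renormalized edge open when its parallelogram has a centered crossing gives a $1$-dependent oriented percolation with edge probability exceeding $1-3^{-36}$, hence supercritical; the infinite component of this renormalized process \emph{is} the connected path, with connectivity across and between strips guaranteed by the overlap geometry rather than by Borel--Cantelli or ad~hoc patching. The passage-time bound along that path is then exactly your computation (with $2^n$ edges per strip in place of $L_k$), and the extra slack from $\lambda<1/\npa$ is what lets the paper optimize to the stated threshold.
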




Our proof of \thref{thm:OZ} uses the block construction for oriented percolation devised by Durrett in \cite{RD84}. The modified proof is inspired by what was recently done for an inhomogeneous percolation model studied by Cristali, Junge, and Durrett \cite{Pperc}. The difference is that they worked with $p$-bond percolation for $p = p_c+\epsilon +o(1)$ for fixed $\epsilon>0$. In this work, we must consider $p = p_c + o(1)$. This makes critical exponents (i.e., the behavior of $p$-bond percolation very near $p_c$) a relevant quantity.

The idea is to map a lattice of interlocking parallelograms to a copy of $\OZ^2$. If one such parallelogram has a ``centered crossing," then we declare the corresponding bond open in the renormalized lattice. The sizes and slopes of the parallelograms depend on their distance to $\0$ and on $\npa$. Making them suitably larger than the correlation length causes one-dependent bond percolation on the renormalized process to be supercritical and have an infinite connected path. We show that this path has a passage time that is summable so long as $a$ is sufficiently small.

\subsection{Organization}
Section \ref{sec:comparison} uses results from critical first passage percolation to prove \thref{cor:fpp}. Section \ref{sec:m=1} contains the the proof of \thref{thm:superlinear}. Section \ref{sec:OZ} is dedicated to \thref{thm:OZ}.

\section{Explosions}
\label{sec:comparison}

The following arguments rely on a connection between critical first passage percolation and the critical frog model. Let $$B_t(G,F) = \{ x \in \mathcal V \colon T(\0,x) \leq t\}$$ be the set of sites that can be reached from $\0$ by time $t$ in first passage percolation. To prove the first two parts of \thref{cor:fpp}, we will show that $\CFM(G,m)$ can be coupled to critical first passage percolation so that $B_t(G,F) \subseteq \xi_t$ for an appropriate choice of $F$. A technical difficulty is that $F(+\infty) <\infty$, so there are infinite passage time edges. This case is not directly covered by \eqref{eq:damron_iff} or \thref{thm:OZ}. We explain in the proofs how the results are extended to this setting.

\begin{proof}[Proof of \thref{cor:fpp} \ref{Z}]
	We claim that the set of activated sites in $\CFM(\mathbb Z^2, 4)$ dominates $B_t(\mathbb Z^2, F)$ with passage times specified by
\begin{align} F(x) =  p_c + \f{(1-p_c)(1 - e^{-x})^2}{16}
.\label{eq:te}	
\end{align}
Equivalently, $$t_e = \begin{cases} 0, & p_c \\ \max(X_e^{(1)},X_e^{(2)}), & (1-p_c)/16 \\ \infty, & \text{otherwise} 	
 \end{cases}$$
with $X_e^{(1)}$ and $X_e^{(2)}$ independent unit exponential random variables. For the remainder of this proof we let $F$ and $t_e$ refer to this distribution and random variable.

The coupling between the $\CFM(\mathbb Z^2, 4)$  and $F$-critical first passage percolation is fairly straightforward to describe. At each vertex $x$ assign the edges attached to $x$ in a one-to-one manner to the four sleeping frogs at $x$. The frogs, when woken up, still move to a random neighboring vertex, but we will only use that frog if it moves across its pre-assigned edge and that edge was not already $p_c$-open.

More precisely, we give an edge $e=(x,y)$ passage time $0$ if it is open in the underlying $p_c$-bond percolation. If $e$ is not open, but both frogs at $x$ and $y$ assigned to $e$ move across it in their first step, then we assign the maximum of their two passage times to $e$. Otherwise $e$ has passage time $
\infty$. The set of activated sites in this subprocess is dominated by the critical frog model in which frogs are only allowed to move one step along their preassigned edge. This is because $t_e$ is 0 with the same probability in each process. Alternatively, if $t_e>0$, it is stochastically larger than the time it takes for a frog to cross $e$. This is because we take the maximum of the two crossing times of the frogs at each end of $e$.

The criterium at \eqref{eq:damron_iff} does not immediately apply when infinite passage times are possible. However, it does apply to $F^{(1)}$ induced by the truncated passage times $t_e^{(1)} = t_e \wedge 1$. It is straightforward to check that $F^{(1)}(x) -p_c = O(x^{-2})$ as $x \to 0$, and so the criterium at \eqref{eq:damron_iff} to have $\rho(\mathbb Z^2, F^{(1)}) < \infty$ a.s.\ is satisfied. It is proven in \cite[Lemma 4.3]{fpp} that $\rho < \infty$ implies there exists an infinite vertex self-avoiding path $\gamma$ from $\0$ to infinity. Since the passage time is finite, such a path uses only finitely many weight-$1$ edges. So, all edges beyond some almost surely finite distance $R$ have passage time less than $1$. Using the fact that $t_e$ and $t^{(1)}_e$ are coupled via $\omega_e$, it follows that critical first passage percolation with $F$ has an infinite vertex self-avoiding path outside of the ball of radius $R$ with finite passage time.
The time to discover this path is almost surely finite because the shape theorem for the frog model from \cite{combustion} ensures that set of activated sites in the usual frog model (one particle per site, no identification among percolation clusters) a.s.\ contains a linearly expanding ball. Since $\CFM(\mathbb Z^2,4)$ dominates the usual frog model, it follows that $\CFM(\mathbb Z^2,4)$ is explosive .  
\end{proof}

The proof for the oriented lattice is similar.

\begin{proof}[Proof of \thref{cor:fpp} \ref{OZ}]
	The analogous distribution to \eqref{eq:te} to compare to $\CFM(\OZ^2,2)$ is
\begin{align}
F(x) = p_c + (1-p_c)(1-e^{-x})/2\label{eq:teo}
\end{align}
so that 
$$t_e = \begin{cases} 0, & p_c \\ X_e , & (1-p_c)/2 \\ \infty, & \text{otherwise} 	
 \end{cases}$$
 with $X_e$ a unit exponential random variable.
The function and coupling are a little cleaner here since frogs must move away from $\0$ and there are only two outward edges at each site. Once again, we only allow frogs to move one step and along a predetermined outward edge assigned in a bijective manner. If the frog moves along its assigned edge, then the passage time is $X$-distributed. If the frog does not move along that edge, then the passage time is infinite. 

It is easy to check that the truncated passage times $t^{(1)}_e = t_e \wedge 1$ satisfy the hypotheses of \thref{thm:OZ} with $a=1$. Thus, assuming the other hypothesis $\npa < 2$ (defined in Section \ref{sec:OZ}) holds, we have $\rho(\OZ^2, F^{(1)}) < \infty$ a.s.\ with $F^{(1)}$ the distribution induced by $t^{(1)}_e$. As this path can contain only finitely many edges with weight-$1$, it follows that $\rho(\OZ^2,F) < \infty$ with positive probability. To translate this into an almost sure statement about the frog model, notice that each time $M_t$ increases for the frog model, a new embedded subprocess can be started from that point because the edges beyond have yet to be discovered. With positive probability the embedded first passage percolation model reaches infinity in finite time, and thus the frog model is explosive. Since this happens infinitely many times, the probability of an explosion is $1$.
\end{proof}

The idea with the tree is similar, but now we can use independence among subtrees to construct a subprocess that follows a growing number of lineages.

\begin{proof}[Proof of \thref{cor:fpp} \ref{tree}]
We can use the edge weights $\omega_e$ and information about the movement of frogs to induce a Galton Watson tree $\T$ whose vertices are a subset of $\mathbb T_d$. A \emph{leaf} in a bond percolation cluster is a vertex whose parent edge is open, but all child edges are closed.  Let $L(v)$ be the set of all leaves in the $p_c$-bond percolation cluster started at vertex $v$. For each $u\in L(v)$ let $A_u$ be the event that the first step of the frog at $u$ is to a child vertex of $u$. Set $C_v = \{ u \in L(v) \colon \ind{A_u} = 1\}$. Notice that 
$$|C_v| = \text{binomial}(|L(v)|, d/ (d+1)).$$
 
  The tree $\T$ starts with root $\0$. The first level of $\T$ consists of all vertices from $C(L(\0))$. The vertices attached to each $v$ at level $i+1$ are  the vertices from $C(L(v))$. To each edge $(u,v)$ in $\T$, we attach an independent unit exponential passage time. 
Using independence, and self-similarity of $\mathbb T_d$, it follows that $\T$ is a Galton-Watson tree with offspring distribution $Z = |C_{\0}|$. Another advantage of $\T$ is that it tracks the edges with non-zero passage time in the frog model in which we only consider frogs at the leaves of critical bond percolation clusters that jump away from the root on their first step. After this jump these frogs are removed.

Let $Z'$ be the size of the cluster containing the root in critical percolation on $\mathbb T_d$. It is a classical result that $$P(Z' \geq n^2) = \f{c'}{n}(1 + o(1))$$ for some $c'>0$ (see \cite[Theorem 2.1]{tree_critical}). It is proven in  \cite[Theorem 2]{minami} that the number of leaves in a Galton-Watson tree, call it $Z''$, corresponds to a Galton-Watson process with the same asymptotic right-tail behavior as $Z'$. Thus, 
\begin{align} P(Z'' \geq n^2)  \geq \f {c''} n
\end{align}
for all $n$ and some $c''>0$. Recall that our offspring distribution of interest is $Z = |C_\0|$. A binomial thinning of an infinite random variable does not change the right-tail behavior, so we have
\begin{align} P(Z \geq n^2)  \geq \f {c} n\label{eq:Ztail}
\end{align}
for all $n$ and some $c>0$.
 The above line ensures that the offspring distribution $Z$ satisfies the ``plumpness" criterium of \cite[Theorem 1.3]{explosion}. The condition to check for an explosion to occur is that $F^{-1}( e^{ - \lambda ^k})$ is summable for some $\lambda>0$. As we are assigning independent unit exponential edge weights, it is straightforward to verify that this is summable for any $\lambda>0$. Thus, conditional on $\mathcal T$ being infinite, it almost surely contains an infinite path with finite passage time. 

As noted during the construction of $\mathcal T$, the set of sites visited by time $t$ in on $\mathcal T$ is equivalent to the set of activated sites in a modified frog model in which we remove all frogs at non-leaf vertices of critical percolation clusters and only allow the remaining frogs to take one step. It follows that this process reaches infinity in finite time with positive probability. To turn this into an almost sure statement consider a slightly modified process. The frog started at the root will almost surely escape to $\infty$ along a unique, uniformly sampled vertex self-avoiding path. Each vertex $v$ along this path contains a frog and also a copy of $\mathbb T_d$ rooted at the child vertex of $v$. With positive probability each of these subtrees will have infinitely many frogs activated in finite time. It follows that after an almost surely finite time one of these subtrees will be discovered by the frog at the root, and thus in finite time infinitely frogs will be activated.
\end{proof}

\section{Results with one particle per site}
\label{sec:m=1}

\subsection{The oriented lattice}

\begin{proof}[Proof of \thref{thm:superlinear} \ref{oriented}]
	
Analogous to \thref{cor:fpp} \ref{tree}, we will again use an embedded subprocess. Let $\mathcal C(v)$ denote the cluster containing the vertex $v$. Let $x_1$ be the point in $\mathcal C(\mathbf 0)$ with the largest distance from $\mathbf  0$. If there are multiple candidates, then choose one arbitrarily. Let $\tau_1$ be the time for the frog started at $x_1$ to jump to a neighboring vertex $y_1$. We then iteratively define $x_{i+1}$ to be the vertex of $\mathcal C(y_i)$ with maximal distance from $\0$, breaking ties as before. We set $y_{i+1}$ to be the vertex that the frog started at $x_{i+1}$ first jumps to after waiting $\tau_{i+1}$ time units.

Since we are on the oriented lattice and we are choosing points at maximal distance, we have the jump sizes $J_i= \|y_{i} - x_{i-1}\|$ are independent and identically distributed.
So, we can decompose $\|y_n\| = \sum_{i=1}^n J_i$. The lower bound from \cite[Theorem 1.1]{box} gives $P(J \geq n) \geq c n^{-1/5}$
for some $c >0$ and all $n\geq 1$. As the expected time between each jump has mean $1$, it follows by applying \cite[Theorem 3.8.2]{durrett} to  the embedded jump chain in $(M_t)$ that almost surely $$P(t^{-5+\epsilon} M_t \to \infty) =   P(n^{-5+\epsilon} M_n \to \infty) = 1.$$
The proof in higher dimension is similar, however we only know that $E J = \infty$, in which case we invoke the strong law of large numbers to deduce that $M_t/t \to \infty$. 

One way to show that $EJ = \infty$ goes by contradiction. Suppose that $EJ < \infty$. Then, if $J_h$ is the expected number of sites in the cluster at height $h$,  monotonicity ensures there is a value $h_0$ such that $E J_{h_0} < 1$. The total size of the cluster containing the origin is dominated by starting independent oriented percolation clusters of height $h_0$ at each site counted by $J_{h_0}$. The dominance holds because it can be thought of as adding additional edges to the graph. It follows that the cluster containing the origin is dominated by a subcritical branching process and is almost surely finite. 

Since there are only finitely many accessible edges from the origin up to height $h_0$, the quantity $E J_{h_0}$ is a polynomial and thus continuous. It follows that there is a value $p_c' > p_c$ such that $E_{p_c'} J_{h_0} <1.$ The same reasoning as in the previous paragraph implies that the cluster containing the origin in $p_c'$-bond percolation is almost surely finite. This contradicts the definition of $p_c$. Thus $E J = \infty$. 
\end{proof}

\subsection{The unoriented lattice}

We begin by giving an overview of the proof before turning to the details in the next paragraph. The idea with the unoriented lattice is to use that the cluster size when the process crosses a new half-space in the $e_d$-direction is independent if restricted to sites in the half-space. An issue is that the expected time for a single frog to cross into a new region is infinite. However, an elementary result for one-dimensional simple random walk tells us that the expected time is finite for three or more particles. Accordingly, we wait some amount of time to have three frogs. Next, we let them diffuse until they discover a cluster of size at least $3$. We then use the three nearest particles to the boundary to discover the next cluster of size $\geq 3$. The jumps at each new discovery have infinite expectation, and the distance between large enough discoveries, though random, is controllable. This yields an embedded process with superlinear growth. 

We start by stating and proving the lemma for simple random walk. In essence, it says that at least one of three independent random walks started at $-1,-2$ and $-3$ will reach a site a geometrically distributed distance away in finite expected time. 

\begin{lemma} \thlabel{lem:SRW}
Let $S_t^{(1)}, S_t^{(2)}, S_t^{(3)}$ be three simple random walks on $\mathbb Z$ with $S_0^{(i)} = -i$ for $i=1,2,3$. Let $Y$ be a geometric random variable supported on the nonnegative integers so that $P(Y= k) = (1-q)^kq$ for $k \geq 0$. Let $\sigma = \inf \{t \colon S_t^{(i)} = 2Y \text{ for some $i$}\}$ be the number of steps to first reach $2Y$. It holds that $E \sigma < \infty$.
\end{lemma}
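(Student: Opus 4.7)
The plan is to condition on $Y$, reduce the problem to bounding the expected hitting time of a deterministic level by three independent walkers, and then average using that a geometric random variable has all moments finite. Explicitly,
\[
E\sigma = \sum_{k\ge 0} q(1-q)^k\, E[\sigma \mid Y=k],
\]
so it will suffice to prove $E[\sigma \mid Y=k] = O(k^2)$ (any polynomial in $k$ would do, since geometric moments of every order are finite).

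The key input is a tail bound for the hitting time $\tau_n = \inf\{t \colon S_t = n\}$ of a positive level $n$ by a single simple random walk on $\mathbb{Z}$ started at $0$. The reflection-principle identity
\[
P\bigl(\max_{s\le t} S_s \geq n\bigr) = 2P(S_t > n) + P(S_t = n),
\]
combined with the local central limit theorem, yields the standard bound
\[
P(\tau_n > t) \leq \min\!\left(1,\, \frac{Cn}{\sqrt{t}}\right)
\]
for an absolute constant $C>0$. Since walker $i$ starts at $-i$, its hitting time of $2k$ is equal in distribution to $\tau_{2k+i}$. Using that the three walks are independent,
\[
P(\sigma > t \mid Y=k) = \prod_{i=1}^{3} P(\tau_{2k+i} > t) \leq \min\!\left(1,\, \frac{C^3(2k+3)^3}{t^{3/2}}\right).
\]
Summing the tail gives
\[
E[\sigma \mid Y=k] = \sum_{t\ge 0} P(\sigma > t\mid Y=k) \leq (2k+3)^2 + \sum_{t > (2k+3)^2} \frac{C^3(2k+3)^3}{t^{3/2}} = O(k^2),
\]
where the first term bounds the contribution from $t\le(2k+3)^2$ and the second is comparable to the integral $\int_{(2k+3)^2}^\infty s^{-3/2}\,ds = O(1/(2k+3))$, yielding $O(k^2)$. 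Averaging against the law of $Y$ then gives $E\sigma < \infty$.

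The only nontrivial ingredient is the tail bound on $\tau_n$, which is classical. What makes the argument work is that cubing the tail through independence converts the non-integrable $1/\sqrt t$ into the summable $1/t^{3/2}$: a single walker has $E\tau_n = \infty$, so the presence of three independent walkers is essential. Beyond that, the steps are bookkeeping, and I anticipate no substantial obstacle.
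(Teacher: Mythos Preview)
Your proof is correct and follows essentially the same route as the paper: condition on $Y$, bound the tail of a single walker's hitting time of level $n$ by $O(n/\sqrt{t})$, cube via independence to get a summable $t^{-3/2}$ tail, and then average against the geometric law. The only difference is cosmetic---you obtain the single-walker bound directly from the reflection principle (yielding $Cn/\sqrt{t}$ and hence $E[\sigma\mid Y=k]=O(k^2)$), whereas the paper writes $T_k$ as a sum of $k$ copies of $T_1$ and applies a union bound (yielding $Ck^{3/2}/\sqrt{t}$ and $O(k^{9/2})$); either polynomial is swallowed by the exponential tail of $Y$.
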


\begin{proof}
If $T$ is the first time that a simple random walk started at $0$ reaches $1$, then it is a well known fact (see \cite{durrett}[Chapter 4]) that $P(T > t) \leq C t^{-1/2}$ for some $C>0$ and all $t >0$. If $T_k$ is the number of steps to reach $k$, then we can write $T_k$ as a sum of $k$ i.i.d.\ copies of $T$. The event $\{T_k > t\}$ is contained in the event that one of these copies is larger than $t/k$. Applying a union bound gives $$P(T_k> t) \leq k P(T > t/k) \leq C k(t/k)^{-1/2} = C k^{3/2} t^{-1/2}.$$ Conditional on the event $\{Y=k\}$, it is easy to see that, $$P(\sigma \geq t \mid Y=k) \leq C^3 (2k)^{9/2} t^{-3/2}.$$ Letting $C_0 = 2^{9/2}C^3$, it follows that 
$$E \sigma = \sum_{t=1}^\infty P(\sigma \geq 	t) \leq C_0 \sum_{k=1} ^\infty  k^{9/2}P(Y=k) \sum_{t=1}^\infty t^{-3/2} < \infty.$$
The above line is finite because $P(Y=k)$ decays exponentially fast, and $t^{-3/2}$ is summable.
\end{proof}

\begin{proof}[Proof of \thref{thm:superlinear} \ref{unoriented}]

Let $H$ be the expected vertical displacement of the cluster containing $\0$ in critical bond percolation on the upper-half space $\mathbb H^d = \{(x_1,\hdots, x_d) \colon x_d \geq 0\}$. The main theorem of \cite{uhp} is that the critical value for bond percolation on the half-space is equal to $p_c(\mathbb Z^d)$. A similar argument as at the end of the proof of \thref{thm:superlinear} \ref{oriented} then shows that $E H = \infty$ for all $d \geq 2$. We consider a restricted process which we now define.

 Initially, only the frog started at $0$ is allowed to diffuse. We wait until it visits three distinct sites which happens after a random time with exponential tail. Let $\Gamma_1$ be the union of the clusters containing this initial set of $3$ vertices.  Now we define an iterative exploration process.

Let $Z_1$ be the point with largest $x_d$ coordinate in $\Gamma_1$. 
 As $|\Gamma_1| \geq 3$, choose three sleeping frogs in $\Gamma_1$ that are nearest to the half-space $\mathcal P_1 = \{ x \colon x_d > Z_1\}$. If there are several candidates, then break ties arbitrarily. Wake these three frogs up and for all future steps we will disregard all other frogs (active or asleep) in $\mathcal P_1^c$.  

The displacement of each frog in the $e_d$ direction is a lazy simple random walk that stays in place with probability $(d-1)/d$. Each time the three frogs reach a new maximal $x_2$ coordinate in their collective range, a new cluster in the half-space at that site is discovered. Since we are working on the half-space, the geometry of the newly discovered cluster is independent of the past. After a geometric number of trials with a positive probability of success, a cluster of size at least 3 will be discovered. Each failure requires the frogs to traverse at most an additional 2 units in the $e_d$ direction to find another independent cluster. By \thref{lem:SRW}, the time to discover a large enough cluster is dominated by $\sigma$, which has finite expectation, say, $\mu<\infty$. 

Let $\Gamma_2$ be the newly discovered cluster with size at least $3$. We repeat the procedure from before. That is, we select three particles nearest the undiscovered boundary half-space, then disregard all frogs, except for the three newly activated ones, outside of that plane. These three frogs will discover a new large cluster after at most $\mu$ steps in expectation. Iterate this procedure indefinitely.

As mentioned in the first paragraph of this argument, the expected displacement is infinite at each step (conditioning the cluster to have size at least 3 only increases this quantity). Moreover, the time between each jump has expectation uniformly bounded by $\mu$. Letting $R_t$ be the maximal $x_2$ coordinate among the explored sites in this process, it follows from the strong law of large numbers that $R_t/t \to \infty$ almost surely. Since $M_t \geq R_t$, the claimed result follows.
\end{proof}

\section{Critical first passage percolation on the oriented lattice}
\label{sec:OZ}

 It is often convenient to represent $\OZ^2$ with a rotated and rescaled lattice ${\mathcal L} = \{ (m,n) \in \ZZ^2 \colon m+n \hbox{ is even} \}$
with oriented edges from $(m,n) \to (m+1,n+1)$ and $(m,n) \to (m-1,n+1)$. 
 We start by defining the critical exponent from the statement of \thref{thm:OZ}.

As mentioned earlier, there is a value $p_c(\OZ^2) \in (0,1)$. In what follows, we work on the rotated lattice $\mathcal L$ defined at the beginning of Section \ref{sec:fpp_def}. Let $r_n$ be the rightmost point at height $n$ that can be reached by a $p$-open path started from a vertex on the negative half-line $(-\infty, 0] \times \{0\}$. It is known that $\alpha(p) = \lim_{n \to\ infty} r_n /n$ exists, is positive for $p > p_c$ \cite{RD84} and that $\alpha(p_c) = 0$. Supercritical percolation clusters grow like a cone with slope $1/\alpha(p)$. Looking towards defining the correlation length, let $\sigma_p(L,\delta)$ be the set of vertices contained in the parallelogram with boundary vertices
\begin{align}
u_0 = (-1.5\delta L,0) & \qquad u_1 = ((1+1.5\delta)L, (1+3\delta)L/\alpha(p)) \nonumber \\
v_0 = (-0.5\delta L,0) & \qquad v_1 = ((1+2.5\delta)L, (1+3\delta L)/\alpha(p)) \label{eq:sigma}
\end{align}
We say that $\sigma_p(L,\delta)$  contains a \emph{centered crossing} if there is a $p$-open path contained in $\sigma_p(L,\delta)$ that starts and ends near the middle of the boundary. More precisely, the crossing must start at a vertex in $[\lceil -1.25\delta L, -.75\delta L\rceil] \times \{0\}$ and end at a vertex in $[\lceil (1+1.75 \delta)L, (1+ 2.25 \delta L)\rceil ] \times \{\lceil (1 + 3 \delta) L / \alpha(p)\rceil \}$. Given $0<\epsilon<3^{-36}$, let $L(p,\epsilon)$ be the minimum value such that the probability of a centered crossing is at least $1-\epsilon$: 
$$L(p,\epsilon) = \min \{ L \colon P(\text{there exists a centered crossing of $\sigma_p(L,1/10)$}) > 1- \epsilon
\}.$$
We assume that $\epsilon <3^{-36}$ to ensure that $L(p,\epsilon) \to \infty$ as $p \downarrow p_c$. The $3^{-36}$ comes from a contour argument in \cite[Section 10]{RD84} which implies, if $L(p, \epsilon)$ increased to a finite limit, then we would have a block construction that guaranteed survival at the critical value, and by continuity of finite block probabilities also at a $p < p_c$. We assume that the following correlation exponent $\npa$ exists and is near its conjectured value: 
\begin{align}\lim_{ p \downarrow p_c} \f{\log L(p,\epsilon)}{\log  (p- p_c)} = \npa .\label{eq:npa}	
\end{align}

This is how long the parallelogram must be to ensure there is a centered crossing as $p$ approaches $p_c$. We stress that $\npa$ is widely believed to exist, but its existence has not been rigorously established for oriented or unoriented percolation. 



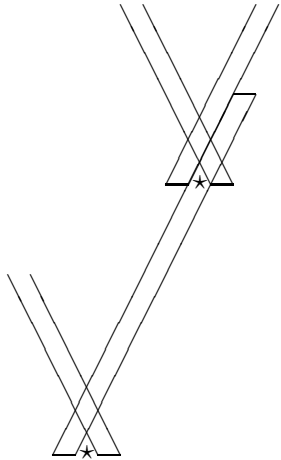
\begin{figure}[h]
\begin{center}
\setlength{\unitlength}{0.30mm}
\begin{picture}(140,220)
\put(30,30){\line(1,2){80}}
\put(40,30){\line(1,2){80}}
\put(30,30){\line(1,0){10}}
\put(110,190){\line(1,0){10}}
\put(50,30){\line(-1,2){40}}
\put(60,30){\line(-1,2){40}}
\put(50,30){\line(1,0){10}}
\put(42,28){$\star$}
\put(92,148){$\star$}
\put(100,150){\line(1,0){10}}
\put(100,150){\line(-1,2){40}}
\put(110,150){\line(-1,2){40}}
\put(80,150){\line(1,0){10}}
\put(80,150){\line(1,2){40}}
\put(90,150){\line(1,2){40}}
\end{picture}
\vspace{-1 cm}
\caption{Picture of the block construction. Stars mark points of the renormalized lattice. When a parallelogram has an open path we declare the corresponding edge open. }
\label{fig:block}
\end{center}
\end{figure}

\begin{proof}[Proof of \thref{thm:OZ}]
To ensure that our parallelograms are not too long and that our block construction will percolate, we fix $\lambda < 1/ \npa$ and $0<\epsilon <3^{-36}$.  Set $p_n = p_c + 2^{-\lambda n}$ and let $L_n = L(p_n,\epsilon)$. The definition of $\npa$ ensures that
\begin{align}L_n =O(( p - p_c )^{\npa})= O( 2^{\lambda \npa n}) = o(2^n) \label{eq:Ln}.\end{align}
 Choose $n_1$ large enough so that $L_n < 2^{n-1}$ for all $n \geq n_1$. 
 
 As in \cite{OPperc}, we divide the space into strips $\mathbb Z \times [2^m, 2^{m+1}]$. In each strip we lower bound the growth by considering percolation with parameter $p_{m+1}$ and do a block construction as described in \cite[Section 9]{RD84}. When we get to the top and the parallelograms do not completely fit in the strip, we regard them as part of the next strip.
%
%
By identifying each parallelogram to an edge in a renormalized lattice, we obtain a $1$-dependent version of oriented bond percolation on a new copy of $\mathcal L$ in which edges are open with probability $1-\epsilon$. Our choice of $\epsilon$ ensures that this 1-dependent process almost surely contains an infinite component because of the uniform bound $p_c < 1- \epsilon$ for all 1-dependent percolation models proven in \cite[Section 10]{RD84}.

 The passage time of a $(p_c+ 2^{-\lambda n})$-open crossing from height $2^n$ to $2^{n+1}$ is bounded by the number of edges with $\omega_e \in (p_c, p_n]$. Since there are $2^n$ edges and the $\omega_e$ are uniform random variables, this has expected value that is $$O((p_n - p_c) 2^n) = O(2^{ -\lambda n} 2^{n}).$$ Worst case, the time to cross each of these edges is  $F^{-1}( p_n) = O(2^{-(\lambda/a) n })$. This bound on $F^{-1}(p_n)$ comes from our hypothesis that $F(x) = p_c + O(x^a)$. Thus, conditional on there being a crossing, the expected passage time from height $2^n$ to height $2^{n+1}$ is  $$O(2^{ -\lambda n} 2^{n} 2^{-(\lambda /a) n}) = O(2^{[1-\lambda (  1 +1/a)]n}).$$ 
 
If $1- \lambda(1+ 1/a) < 0$, then the exponent is negative, and thus the total expected passage time along a connected chain of parallelograms is summable. Choosing $\lambda$ near $1/ \npa$, the condition on $a$ becomes the extra hypothesis in \thref{thm:OZ}. Recall that the origin in the renormalized lattice corresponds to the height $2^{n_1}$, and the infinite connected chain begins at some almost surely finite height above this.  Since we assume that $F(+\infty) =1$, the time to reach this infinite connected chain is almost surely finite. It follows that $\rho(\OZ^2,F)< \infty$ almost surely. 
\end{proof}

\subsection*{Acknowledgements}
Thanks to Rick Durrett for helpful feedback and discussion, as well as to Michael Damron for lively and useful discussions about critical first passage percolation.

\bibliographystyle{alpha}
\bibliography{frog_explostion.bib}
	
\end{document}